\algnewcommand{\algorithmicgoto}{\textbf{go to}}%
\algnewcommand{\Goto}[1]{\algorithmicgoto~\ref{#1}}%
\newtheorem{theorem}{Theorem}%
\newtheorem{lemma}{Lemma}%
\newtheorem{proof}{Proof}%
\newenvironment{keywords}{
	\vspace{1em}\noindent\textbf{Keywords:}\ } %
{}
\begin{document}

\newgeometry{top=72pt,bottom=54pt,right=54pt,left=54pt}

\title{Optimal Diffusion Processes}
\author{Saber Jafarizadeh \\
	Member~IEEE \\
	Rakuten Institute of Technology, Rakuten Crimson House, Tokyo, Japan \\
	\texttt{saber.jafarizadeh@rakuten.com}}

\date{}
\maketitle
\thispagestyle{empty} %

\bibliographystyle{plain}

\begin{abstract}

Of stochastic differential equations, diffusion processes have been adopted in numerous applications, as more relevant and flexible models.
This paper studies diffusion processes in a different setting, where for a given stationary distribution and average variance, it seeks the diffusion process with optimal convergence rate.
It is shown that
the optimal drift function is a linear function and
the convergence rate of the stochastic process is bounded by the ratio of the average variance to the variance of the stationary distribution.
Furthermore, the concavity of the optimal relaxation time as a function of the stationary distribution has been proven, and it is shown that all Pearson diffusion processes of the Hypergeometric type with polynomial functions of at most degree two as the variance functions are optimal.

\end{abstract}

\begin{keywords}
	Stochastic differential equation, Diffusion process, Fokker-Planck equation, Optimization
\end{keywords}

\section{Introduction}

Due to random nature of any real signal, propagation media, interference and even information itself,
the
stochastic %
approach has been one of the popular choices
towards problems associated with information transmission, as well as other applications in physics, mathematical biology, queueing, electrical engineering, economic and finance
\cite{Pereyra2016,Albanese2001D02,Linetsky2006,Mendoza2009,William2015,EXARCHOS2018159,Farsad2016}.
A particularly effective application of the stochastic approach is the modeling of the phenomena as random processes which are the solution of an appropriate Stochastic Differential Equation (SDE),
i.e., an ordinary differential equation with random excitation, due to the presence of noise in the dynamics of the process \cite{Iacus2008,Bishwal2008}.
The basic idea behind this approach is that the state of the studied process is the solution of an SDE with a priori information, such as
the stationary %
distribution.

SDEs are generally identified by two main parameters, the drift and the variance functions.
Based on dependency of the drift and variance functions on the previous values of the process as well as the present values, the SDEs are categorized into It\^{o} and diffusion processes.
In diffusion processes (which are the focus of studies in this paper), the drift and variance functions depend only on the present value of the process and their solution satisfy the Markov property \cite{Borodin2017},
which results in a unique stationary distribution for the diffusion process.
Diffusion processes with invariant equilibrium distributions have found applications in approximating the process of sampling from high dimensional probability distributions 
\cite{Wu2014}.
One of the important factors in such applications is the convergence rate of the diffusion process and its optimizations. 
\cite{Hwang1993}
is regarded as one of the pioneering works that has studied the optimization of convergence rate of the diffusion processes.
In
\cite{Hwang1993,Wu2014}
(and the following works of
\cite{Hwang1993}),
optimization of the convergence rate of the non-reversible diffusion processes have been addressed, but at the expense of limiting the variance function to be a constant function. 
This is unlike the studied presented in this paper, where no such restrictions have been applied on the variance function. 
In a nutshell, in this paper, for different types of diffusion processes with given average variance function and stationary distribution,
we have provided the diffusion process with the optimal convergence rate,
which has been possible by adjust the variance function.
In detail, for a given distribution $\pi(x)$,
our goal is
to construct the diffusion process with optimal convergence rate whose stationary distribution is $\pi(x)$.

We
have shown that
the optimal drift function is a linear function and
the optimal convergence rate of the diffusion process is equal to the ratio of the average variance to the variance of the stationary distribution.
Therefore, for all stochastic processes with given stationary distribution $\pi(x)$ and average variance of $\widehat{\sigma}^{2}/2$, the convergence rate of the stochastic process is bounded by $\frac{ \widehat{\sigma}^{2}/2 }{ m_2 - m_1^2 }$, where $m_2 - m_1^2$ is the variance of the stationary distribution $\pi(x)$.
Furthermore, we have proved the concavity of the optimal relaxation time $(\tau)$ as a function of the stationary distribution $(\pi(x))$.

We have shown that all Pearson diffusion processes of the Hypergeometric type with polynomial functions of at most degree two as the variance functions are optimal.
In a table, we have provided the optimal convergence rate along with other attributes for the Pearson diffusion processes. Pearson diffusion processes are diffusion processes
whose %
drift and variance functions are polynomials of degrees at most one and two, respectively.
Also, we have provided the optimal results for two other types of diffusion processes, namely
(i) diffusion processes with polynomial functions of degree higher than two as the variance function, and
(ii) diffusion processes with hyperexponential distribution as the stationary distribution, which is an example of a diffusion process with a non-polynomial function as its variance function.

The rest of the paper is organized as follows.
In Section \ref{Sec:Preliminaries}, Preliminaries on Fokker-Plank equations are presented.
Our general approach towards optimization of the diffusion processes has been provided in Section \ref{sec:OptimalDiffusionProcesses}.
In Section \ref{sec:MainResults}, main results of the paper have been presented,
and in Section \ref{sec:examples}, the optimal results for three different types of diffusion processes have been reported.
and Section \ref{sec:Conclusions} concludes the paper.

\section{Preliminaries}
\label{Sec:Preliminaries}

\subsection{Fokker-Plank Equation }
Let's consider the following time-homogeneous diffusion process (also known as It\^{o} diffusion) $X(t)$,
\begin{equation}
    \label{eq:EqDiffusionTimeHomogeneous}
    \begin{gathered}
        \mathrm{d} X(t) = \mu( X(t) ) \, \mathrm{d} t + \sigma ( X(t) ) \, \mathrm{d} W(t),
     \end{gathered}
\end{equation}
where $W(t)$ is a Brownian motion in $\mathbb{R}$, The function $\mu$ is the drift coefficient of $X$ and $\sigma$ is known as the diffusion coefficient of $X$ and it can be interpreted as a variance.
Imposing any control term to either the drift coefficient or the diffusion coefficient will transform the diffusion process (\ref{eq:EqDiffusionTimeHomogeneous}) into a controlled diffusion process \cite{Krylov1995,Krylov1980}.
The domain of $X(t)$ is denoted by $I = ( x_1 , x_2 )$ where $-\infty \leq x_1 < x_2 \leq \infty$.

The process
$X(t)$ %
is Markov, and its transition density
$p(t;x,y)$ %
is defined as
\begin{equation}
    \label{eq:267}
    \begin{gathered}
        P\left( X(t) \in A | X_0 = x \right)  = \int_{A}  p(t;x,y)  dy,
     \end{gathered}
\end{equation}
for $t \geq 0$ and any Borel set $A \subseteq I$.
Under regularity conditions, the transition density solves the backward Kolmogorov equation,
\begin{equation}
    \label{eq:20170726284}
    \begin{gathered}
        \frac{ \partial p(t;x,y) }{ \partial t }  =  \mathcal{A} \left[ \mu , \sigma^2 \right] p(t;x,y),
        \quad t>0,
        \quad (x,y) \in I,
     \end{gathered}
\end{equation}
where $\mathcal{A}$ is an infinitesimal generator defined as below,
\begin{equation}
    \label{eq:20170726296}
    \begin{gathered}
        \mathcal{A} \left[ \mu , \sigma^2 \right] p(t;x,y)
        =
        \mu(x) \frac{ \partial p(t;x,y) }{ \partial x }
        +
        \frac{1}{2} \sigma^{2}(x) \frac{ \partial^2 p(t;x,y) }{ \partial x^2 }.
     \end{gathered}
\end{equation}
$p(t;x,y)$ %
can be completely characterised by the generator $\mathcal{A} \left[ \mu , \sigma^2 \right]$.
Therefore, it is a functional of $\mu$ and $\sigma^2$ and it can be written as
$p(t;x,y)  =  p\left(t;x,y ; \mu, \sigma \right)$. %
Suppose that
$\{X(t)\}$  %
is strictly stationary and ergodic, in which case it has a stationary marginal density which we denote $\pi(x)$,
$P\left( X(t) \in A \right)  =  \int_A \pi(x) dx$, %
for $t \geq 0$ and any Borel set $A \subseteq I$.
Note that $\pi$ is invariant, i.e.
\begin{equation}
    \label{eq:20170727326}
    \begin{gathered}
        \pi(y)  =  \int_{I} p(t;x,y) \pi(x) dx,
        \quad   \text{for}   \quad   t \geq 0.
    \end{gathered}
\end{equation}
Alternatively, the characterization of the transition density can be obtained through the forward Kolmogorov equation,
\begin{equation}
    \label{eq:278}
    \begin{gathered}
        \frac{\partial  p(t;x,y) }{\partial t}
        =
        - \frac{ \partial \left( \mu(y) p(t;x,y) \right) }{ \partial y }
        +
        \frac{1}{2} \frac{ \partial^2 \left( \sigma^{2}(y) p(t;x,y) \right) }{ \partial y^2 },
     \end{gathered}
\end{equation}
for $t > 0$,
$x \in I$ %
with initial value
$p(0;x,y) = \delta(y - x)$ %
and the following boundary conditions, which are corresponding to reflecting barriers,
\begin{equation}
    \label{eq:FokkerPlankBoundaryConditions}
    \begin{gathered}
        \left. \frac{1}{2} \frac{\partial}{ \partial y } \left( \sigma^2(y) p(t;x,y) \right)  -  \mu(y) p(t;x,y) \right|_{ y = y_1, y_2 }  =  0.
     \end{gathered}
\end{equation}
Multiplying (\ref{eq:278}) with $\pi(x)$ and integrating over $x$ and using (\ref{eq:20170727326}) the following is obtained,
\begin{equation}
    \label{eq:281}
    \begin{gathered}
        \frac{d}{dy} \left( -\mu(y) \pi(y) + \frac{1}{2}\frac{d}{dy} \left( \sigma^2(y) \pi(y) \right) \right)  =  0.
     \end{gathered}
\end{equation}
If the detailed balance condition is satisfied, i.e.
\begin{equation}
    \label{eq:292}
    \begin{gathered}
        -\mu(y) \pi(y) + \frac{1}{2} \frac{d}{dy} \left( \sigma^2(y) \pi(y) \right) = 0,
     \end{gathered}
\end{equation}
then the process $X(t)$ is reversible wrt $\pi(x)$.
Using the detailed balance condition (\ref{eq:292}), we can conclude
\begin{equation}
    \label{eq:20170727442}
    \begin{gathered}
        \mu(x) = \frac{1}{2 \pi(x)} \frac{\partial}{\partial x} \left( \sigma^2(x) \pi(x) \right).
     \end{gathered}
\end{equation}
All the control effort will be applied to the variance term, since considering (\ref{eq:20170727442}), the drift term can be derived in terms of the variance term.
(\ref{eq:20170727442}) is known as Pearson equation
\cite{Pearson1914} %
if $\mu(x)$ and $\sigma^2(x)/2$ are a polynomial of degrees at most one and two, respectively.
Using (\ref{eq:20170727442}), the backward (\ref{eq:20170726284}) and forward Kolmogorov (\ref{eq:278}) equations reduce to the following forms,
\begin{equation}
    \label{eq:20170727453}
    \begin{gathered}
        \frac{\partial}{\partial t} p(t;x,y)
        =
        \frac{1}{2 \pi(x)} \frac{\partial}{\partial x} \left( \sigma^2(x) \pi(x) \frac{\partial}{\partial x}  p(t;x,y) \right)
        \quad \text{for} \quad t>0,
     \end{gathered}
\end{equation}
\begin{equation}
    \label{eq:20170727461}
    \begin{gathered}
        \frac{\partial}{\partial t} \frac{p(t;x,y)}{\pi(y)}
        =
        \frac{1}{2\pi(y)} \frac{\partial}{\partial y} \left( \sigma^2(y) \pi(y) \frac{\partial}{\partial y} \left( \frac{ p(t;x,y) }{ \pi(y) } \right) \right)
        \quad \text{for} \quad t>0.
     \end{gathered}
\end{equation}
By separating the spatial and time variables in (\ref{eq:20170727453}) and (\ref{eq:20170727461}), we have
\begin{equation}
    \label{eq:20180217489}
    \begin{gathered}
        \frac{1}{2} \frac{d}{dx} \left( \pi(x) \sigma^2(x) \frac{d}{dx} \phi(x) \right)  +  \lambda \pi(x) \phi(x)  =  0,
     \end{gathered}
\end{equation}
with boundary conditions
\begin{equation}
    \label{eq:20180217500}
    \begin{gathered}
        \left. \frac{ \sigma^2(x) }{ 2 } \pi(x) \frac{d}{dx}\phi(x) \right|_{x_1,x_2}  =  0.
     \end{gathered}
\end{equation}
From the classical Sturm-Liouville theory, it is known that the spectrum of
(\ref{eq:20180217489}) %
is discrete if $x_1$ and $x_2$ are finite.
If one or both of the boundaries are infinite, then a continuous range of eigenvalues may be present.
In terms of the solution of
(\ref{eq:20180217489}) %
the principal solution
$p(t;x,y)$ %
of
(\ref{eq:278}) %
can be written as below,
\begin{equation}
    \label{eq:Eq372} %
    \begin{gathered}
        p(t;x,y)
        =
        \pi(y)
        (  \sum_{n} e^{-t \lambda_n} \phi_n(y) \phi_n(x)
                +
                \int e^{-t \lambda} \phi(\lambda, x) \phi(\lambda, y) d\lambda
        ),
     \end{gathered}
\end{equation}
where the summation is taken over all discrete eigenvalues and the integral is taken over the continuous range of eigenvalues.
The eigenfunctions $\phi(x)$ in (\ref{eq:Eq372}) are assumed to be normalized so that corresponding to discrete eigenvalues we have
\begin{equation}
    \label{eq:Eq386}
    \begin{gathered}
        \int_{x_1}^{x_2} { \pi(x) \phi_{m}(x) \phi_{n}(x) dx }  =  \delta_{mn},
     \end{gathered}
\end{equation}
and corresponding to continuous eigenvalues
\begin{equation}
    \label{eq:Eq396}
    \begin{gathered}
        \int_{x_1}^{x_2} { \pi(\varepsilon) \phi( \lambda, \varepsilon ) \phi( \lambda^{'} , \varepsilon ) d\varepsilon }  =  \delta( \lambda - \lambda' ).
     \end{gathered}
\end{equation}
The Sturm-Liouville equation (\ref{eq:20180217489}) has at least one discrete eigenvalue equal to zero
$(\lambda_0 = 0)$ %
which corresponds to eigenfunction $\phi_0(x) = 1$.
Considering the discrete eigenvalues indexed as below,
\begin{equation}
    \label{eq:Eq435}
    \begin{gathered}
        0 = \lambda_0  \leq \lambda_1 \leq \lambda_2 \leq \cdots \leq \lambda_k.
    \end{gathered}
\end{equation}
It is assumed that equation
(\ref{eq:278}) %
has at least one more discrete eigenvalue
(other than $\lambda_0 = 0$)  %
denoted by
$\lambda_1$ %
which determines
the convergence rate of $Q(x,t)$ to its equilibrium point. %
If we consider the diffusion equation as a dynamical system, then
$\lambda_k$ %
are the Lyapunov exponents of the system and their inverse, i.e.,
$1/\lambda_1$ %
are the relaxation times of the system.

From (\ref{eq:Eq372}), it can be concluded that in the limit of $t \rightarrow \infty$, $p(t;x,y)$ converges to $\pi(y)$,
\begin{equation}
    \label{eq:Eq287}
    \begin{gathered}
        \lim_{t \rightarrow \infty} { p(t;x,y) }   =  \pi(y).
     \end{gathered}
\end{equation}
\section{Optimal Diffusion Processes}
\label{sec:OptimalDiffusionProcesses}

For a given $\pi(x)$, our aim is to obtain the optimal function for $\sigma^2(x)$
that maximizes
$\lambda_1$ %
for given $\pi(x)$, while satisfying the following constraint,
\begin{equation}
    \label{eq:Eq2.1}
    \begin{gathered}
        \int_{x_1}^{x_2} \frac{1}{2}\sigma^2(x) \pi(x) dx  =  \frac{1}{2}\widehat{\sigma}^{2}.
     \end{gathered}
\end{equation}
The function
$\frac{1}{2}\sigma^2(x)$  %
can be interpreted as a variance.
The constraint (\ref{eq:Eq2.1}) is equivalent to fix average variance constraint. %
Thus, our optimization problem can be interpreted as searching for the optimal variance function corresponding to minimum relaxation time, among all variance functions with the same average variance.

For a given variance function $(\sigma^{2}(x)/2)$ satisfying (\ref{eq:Eq2.1}), based on Rayleigh Theorem $\lambda_{1}$ can be written as below,
\begin{equation}
    \label{eq:Eq5}
    \begin{gathered}
        \max_{Q(x)} \;\; - \frac
        { \int_{x_1}^{x_2} Q(x) \frac{d}{d x} \left( \frac{1}{2}\sigma^{2}(x) \pi(x) \frac{d}{d x} Q(x) \right) dx }
        { \int_{x_1}^{x_2} \pi(x) \left( Q(x) \right)^2 dx }.
     \end{gathered}
\end{equation}
This is equivalent to the following,
\begin{equation}
    \label{eq:Eq6}
    \begin{gathered}
        \max_{Q(x)} \;\; \frac
        { \int_{x_1}^{x_2} \frac{1}{2}\sigma^{2}(x) \pi(x) \left( \frac{d}{d x} Q(x) \right)^2 dx }
        { \int_{x_1}^{x_2} \pi(x) \left( Q(x) \right)^2 dx }.
     \end{gathered}
\end{equation}
Therefore, using the min-max optimization
\cite{EkelandBook}, %
calculating optimal $\lambda_{1}$, reduces to the following problem,
\begin{equation}
    \label{eq:Eq7}
    \begin{aligned}
        \max\limits_{\sigma(x)} \;\; \min\limits_{Q(x)}
        \quad
        &
        \int_{x_1}^{x_2} \frac{1}{2}\sigma^{2}(x) \pi(x) \left( \frac{d}{d x} Q(x) \right)^2 dx,
        \\
        s.t.
        \quad
        &\int_{x_1}^{x_2} \pi(x) \left( Q(x) \right)^2 dx  =  1,
        \\& %
        \int_{x_1}^{x_2} \frac{1}{2}\sigma^{2}(x) \pi(x) dx  =  \frac{1}{2}\widehat{\sigma}^{2},
        \\& %
        \int_{x_1}^{x_2} \pi(x) Q(x)  dx  =  0,
    \end{aligned}
\end{equation}
with the following boundary conditions
\begin{equation}
    \label{eq:Eq8}
    \begin{gathered}
        \left. \frac{1}{2}\sigma^{2}(x) \pi(x) \frac{d}{d x} Q(x) \right|_{x=x_1}
        =
        \left. \frac{1}{2}\sigma^{2}(x) \pi(x) \frac{d}{d x} Q(x) \right|_{x=x_2}
        =
        0.
     \end{gathered}
\end{equation}
By introducing the relative Lagrange multipliers
$\left( \lambda, \nu, \gamma \right)$ %
the optimization problem in (\ref{eq:Eq7}) is transferred to the following optimization problem,
\begin{equation}
    \label{eq:Eq9}
    \begin{aligned}
        \max\limits_{\sigma(x)} \;\; \min\limits_{Q(x)}
        \quad
        &
        \int_{x_1}^{x_2} \frac{1}{2}\sigma^{2}(x) \pi(x) \left( \frac{d}{d x} Q(x) \right)^2 dx
        \\&
        +
        \lambda \left(  1  -  \int_{x_1}^{x_2} \pi(x) \left(  \pi(x) \right)^2 dx  \right)
        \\&
        +
        \nu \left(  \frac{1}{2}\widehat{\sigma}^{2}  -  \int_{x_1}^{x_2} \frac{1}{2}\sigma^{2}(x) \pi(x) dx  \right)
        \\&
        +
        \gamma \left(  \int_{x_1}^{x_2} \pi(x) Q(x)  dx  \right),
    \end{aligned}
\end{equation}
with the boundary conditions (\ref{eq:Eq8}).
To this aim, we have to set the variation of (\ref{eq:Eq9}) (with respect to $Q(x)$ and
$\frac{1}{2}\sigma^{2}(x)$ %
)
to zero, i.e.,
\begin{equation}
    \label{eq:Eq11}
    \begin{aligned}
        &
        -2 \int_{x_1}^{x_2}  \delta Q(x)  \left( \frac{d}{d x} \left(
        \frac{1}{2}\sigma^{2}(x) %
        \pi(x) \frac{d}{d x} Q(x) \right)
        \right.
        \left.
        +
        \lambda \pi(x) Q(x)  +  \gamma \pi(x)  \right)  dx
        \\&
        \quad
        +
        \int_{x_1}^{x_2}  \pi(x) \delta
        \frac{1}{2}\sigma^{2}(x) %
        \left( \left( \frac{d}{d x} Q(x) \right)^2 - \nu \right)  dx
        +
        \left.
        \frac{1}{2}\sigma^{2}(x) %
        \pi(x) \delta Q(x) \frac{d}{d x} Q(x) \right|_{x=x_i}  =  0,
     \end{aligned}
\end{equation}
for $i=1, 2$.
Due to the boundary conditions (\ref{eq:Eq8}),
the last term
$\left. \frac{1}{2}\sigma^{2}(x) \pi(x) \delta Q(x) \frac{d}{d x} Q(x) \right|_{x=x_i}  =  0$
for $i=1, 2$.
The optimal $Q(x)$ and $\lambda$ are $\phi_1(x)$ and $\lambda_{1}$, respectively.
Thus,
the %
variation of Lagrangian leads to the following,
\begin{equation}
    \label{eq:Eq12}
    \begin{gathered}
        \frac{d}{d x} \left( \frac{1}{2}\sigma^{2}(x) \pi(x) \frac{d}{d x} \phi_1(x) \right)
        +
        \lambda_1 \pi(x) \phi_1(x)
        +
        \gamma \pi(x)
        =
        0,
     \end{gathered}
\end{equation}
\begin{equation}
    \label{eq:Eq13}
    \begin{gathered}
        \left(  \frac{d}{d x} \phi_1(x)  \right)^2
        =
        \nu.
     \end{gathered}
\end{equation}
Integrating Equation (\ref{eq:Eq12}) from $x_1$ to $x_2$ and considering the boundary conditions
(\ref{eq:Eq8}), %
and the constraints in problem (\ref{eq:Eq7}),
then 
it can be concluded that $\int_{x_1}^{x_2}{\gamma \pi(x)} = 0$ and considering the fact that $\int_{x_1}^{x_2}{\pi(x)} = 1$,
then we can be concluded that $\gamma = 0$.
From (\ref{eq:Eq13}), it is obvious that
\begin{equation}
    \label{eq:Eq609}
    \begin{gathered}
        \frac{d}{d x} \phi_1(x)
        =
        \sqrt{ \nu }.
     \end{gathered}
\end{equation}
Substituting (\ref{eq:Eq609}) in (\ref{eq:Eq12}), %
and comparing the resultant equation with
(\ref{eq:20170727442}) %
the following can be concluded for
$\mu(x)$, %
\begin{equation}
    \label{eq:Eq634}
    \begin{gathered}
        \mu(x)  =  \frac{-\lambda_1}{\sqrt{\nu}}\phi_1(x).
     \end{gathered}
\end{equation}
Substituting
$\gamma = 0$ %
in (\ref{eq:Eq12}), and multiplying the resultant by $\phi_1(x)$ and integrating it from $x_1$ to $x_2$, we have
\begin{equation}
    \label{eq:201803031357}
    \begin{gathered}
        \int_{x_1}^{x_2} \frac{d}{d x} \left( \frac{1}{2}\sigma^{2}(x) \pi(x) \frac{d}{d x} \phi_1(x) \right) \phi_1(x)  dx
        +
        \int_{x_1}^{x_2} \lambda_1 \pi(x) \phi_1^{2}(x) dx
        =
        0.
     \end{gathered}
\end{equation}
This is equivalent to the following
\begin{equation}
    \label{eq:201803031378}
    \begin{gathered}
        - \int_{x_1}^{x_2} \frac{1}{2}\sigma^{2}(x) \pi(x)  \left( \frac{d}{d x} \phi_1(x) \right)^{2} dx
        +
        \lambda_1  \int_{x_1}^{x_2} \pi(x) \phi_1^{2}(x) dx
        =
        0.
     \end{gathered}
\end{equation}
Using (\ref{eq:Eq386}), (\ref{eq:Eq2.1}) and (\ref{eq:Eq13}), from (\ref{eq:201803031378}), the following can be concluded for $\lambda_{1}$
\begin{equation}
    \label{eq:201802171163}
    \begin{gathered}
        \lambda_{1}
        =
        \nu \frac{ \widehat{\sigma}^{2} }{ 2 }.
     \end{gathered}
\end{equation}

\section{Main Results}
\label{sec:MainResults}
In the following theorem, we state the main results of this paper deducted based on Section
\ref{sec:OptimalDiffusionProcesses}
and Appendix 
\ref{Sec:DetailedSolution}.
\begin{theorem} %
    \label{Theorem:Mainresults}
    For given stationary distribution $\pi(x)$ and average variance $\widehat{\sigma}^{2}/2$,
    the optimal results corresponding to the diffusion process with fastest convergence rate are as below,
    \begin{equation}
        \label{eq:201702181190}
        \begin{gathered}
            \lambda_1 = \frac{ \widehat{\sigma}^{2} }{ 2 \left( m_{2} - m_{1}^{2} \right) },
            \;\;
            \tau = \frac{1}{\lambda_1} = \frac{ 2 \left( m_{2} - m_{1}^{2} \right) }{ \widehat{\sigma}^{2} },
         \end{gathered}
    \end{equation}
    \begin{equation}
        \label{eq:Eq760}
        \begin{gathered}
            \phi_1(x)
            =
            \frac{ x - m_1 }{ \sqrt{m_2 - m_1^2} },
         \end{gathered}
    \end{equation}
    \begin{equation}
        \label{eq:201803031404} %
        \begin{gathered}
            \mu(x)
            =
            \frac{ \widehat{\sigma}^{2} \left( m_1 - x \right) }{ 2 \left( m_2 - m_1^2 \right) },
         \end{gathered}
    \end{equation}
    \begin{equation}
        \label{eq:201803021411}
        \begin{gathered}
            \frac{1}{2}\sigma^{2}(x)=
            \frac{1}{\pi(x)} \left( \frac{ \widehat{\sigma}^{2} }{ 2 \left( m_2 - m_1^2 \right) } \left( \left( m_1 - x \right) \Pi(x)  +  \int_{x_1}^{x}\Pi(z) dz \right) \right),
         \end{gathered}
    \end{equation}
    where $\lambda_{1}$ and $\tau = 1 / \lambda_{1}$ are the optimal convergence rate and the optimal relaxation time of the diffusion process.
    $\phi_1(x)$ is the optimal eigenfunction corresponding to the optimal $\lambda_{1}$.
    $m_1$ and $m_2$ are first and second moments of the stationary distribution $\pi(x)$.
    $\mu(x)$ and $\frac{1}{2}\sigma^{2}(x)$ are the optimal drift and variance functions.
    $\Pi(x)$ is the cumulative distribution of $\pi(x)$ (i.e., $\Pi(x) = \int_{x_{1}}^{x} \pi(z) dz$).
    From
    (\ref{eq:201702181190}),
    it can be concluded that among diffusion processes where the average of their variance function is equal to $\widehat{\sigma}^{2}/2$, the optimal achievable convergence rate is as in
    (\ref{eq:201702181190}).
    Thus, $\frac{ \widehat{\sigma}^{2} }{ 2 \left( m_{2} - m_{1}^{2} \right) }$ is an upper bound on the convergence rate of the diffusion process with given average variance $\widehat{\sigma}^{2}/2$ and stationary distribution $\pi(x)$.
\end{theorem}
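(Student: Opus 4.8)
The plan is to read off the theorem by specializing the first-order conditions already derived in Section~\ref{sec:OptimalDiffusionProcesses} and then fixing the remaining free constants using the normalization constraints of problem~(\ref{eq:Eq7}). All the ingredients are present: equations~(\ref{eq:Eq12}), (\ref{eq:Eq13}), (\ref{eq:Eq609}), (\ref{eq:Eq634}), (\ref{eq:201802171163}), together with $\gamma=0$.

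First I would integrate~(\ref{eq:Eq609}) to write $\phi_1(x)=\sqrt{\nu}\,(x-c)$ for a constant $c$, and impose the two constraints from~(\ref{eq:Eq7}) that the optimal $Q=\phi_1$ must satisfy: $\int_{x_1}^{x_2}\pi(x)\phi_1(x)\,dx=0$ and $\int_{x_1}^{x_2}\pi(x)\phi_1^2(x)\,dx=1$. The first forces $c=m_1$, the second forces $\nu(m_2-m_1^2)=1$, i.e. $\nu=1/(m_2-m_1^2)$; together these give~(\ref{eq:Eq760}). Plugging this $\nu$ into~(\ref{eq:201802171163}) yields $\lambda_1=\widehat{\sigma}^2/\big(2(m_2-m_1^2)\big)$ and $\tau=1/\lambda_1$, which is~(\ref{eq:201702181190}); plugging $\phi_1$ and $\nu$ into~(\ref{eq:Eq634}) gives $\mu(x)=-\lambda_1(x-m_1)$, which is~(\ref{eq:201803031404}).

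Next I would recover the variance. With $\gamma=0$ and $\tfrac{d}{dx}\phi_1(x)=\sqrt{\nu}$ constant, equation~(\ref{eq:Eq12}) reduces to $\sqrt{\nu}\,\tfrac{d}{dx}\big(\tfrac12\sigma^2(x)\pi(x)\big)=-\lambda_1\pi(x)\phi_1(x)=-\lambda_1\sqrt{\nu}\,\pi(x)(x-m_1)$, so $\tfrac{d}{dx}\big(\tfrac12\sigma^2(x)\pi(x)\big)=\lambda_1\pi(x)(m_1-x)$. Integrating from $x_1$ to $x$, noting that the boundary term $\tfrac12\sigma^2(x_1)\pi(x_1)$ vanishes (this is precisely~(\ref{eq:Eq8}) at $Q=\phi_1$, using $\phi_1'=\sqrt{\nu}\neq0$), and performing one integration by parts with $v=\Pi(z)$ and $\Pi(x_1)=0$, gives $\tfrac12\sigma^2(x)\pi(x)=\lambda_1\big((m_1-x)\Pi(x)+\int_{x_1}^{x}\Pi(z)\,dz\big)$; dividing by $\pi(x)$ and substituting $\lambda_1$ produces~(\ref{eq:201803021411}). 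As a built-in check, at $x=x_2$ the right side is $\lambda_1(m_1-m_1)=0$, so the reflecting condition~(\ref{eq:Eq8}) at the right endpoint is satisfied automatically.

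The step I expect to be delicate is not this computation but the justification that the stationary point of the min-max~(\ref{eq:Eq7}) is the genuine optimum (including the legitimacy of exchanging $\max$ and $\min$ as invoked via~\cite{EkelandBook}), and that the candidate is an admissible diffusion. Admissibility is not hard: $\tfrac12\sigma^2(x)\ge0$ because $F(x):=\int_{x_1}^{x}\pi(z)(m_1-z)\,dz$ equals $0$ at $x_1$, is nondecreasing on $\{z<m_1\}$ and nonincreasing on $\{z>m_1\}$, and returns to $0$ at $x_2$ since $\int_{x_1}^{x_2}\pi(z)(m_1-z)\,dz=m_1-m_1=0$; hence $\tfrac12\sigma^2=\lambda_1 F/\pi\ge0$. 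The optimality itself is what Appendix~\ref{Sec:DetailedSolution} should establish, e.g. confirming that the Rayleigh quotient~(\ref{eq:Eq6}) is maximized at $\phi_1$ and that no variance function with average $\widehat{\sigma}^2/2$ yields a larger $\lambda_1$. Granting that, the closing claim of the theorem---that $\widehat{\sigma}^2/\big(2(m_2-m_1^2)\big)$ bounds the convergence rate of \emph{every} process with the prescribed stationary distribution and average variance---follows at once, since each such process is feasible for~(\ref{eq:Eq7}), whose optimal value is exactly this $\lambda_1$.
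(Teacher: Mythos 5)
Your proposal follows essentially the same route as the paper's Section \ref{sec:OptimalDiffusionProcesses} and Appendix \ref{Sec:DetailedSolution}: integrate (\ref{eq:Eq609}), fix $\nu$ and the additive constant from the orthogonality and normalization constraints to get (\ref{eq:Eq760}), read off $\lambda_1$ and $\mu(x)$ from (\ref{eq:201802171163}) and (\ref{eq:Eq634}), obtain $\tfrac12\sigma^2(x)$ by integrating the first-order relation (equivalently the Pearson equation (\ref{eq:20170727442})) with an integration by parts in $\Pi$, and verify the reflecting boundary conditions at $x_1,x_2$. Your inline positivity argument reproduces the paper's Lemma 2, and the caveat you flag about rigorously justifying the max--min exchange is left at the same level of rigor in the paper itself (a citation to \cite{EkelandBook}), so there is no substantive divergence.
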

In the following lemmas, we have proved the convexity of the optimal convergence rate $(\lambda_1)$ as a function of the stationary distribution $(\pi(x))$.
Also,
we have shown that variance function obtained in
(\ref{eq:201803021411}) %
is a positive function in the interval $[ x_1, x_2 ]$ and its weighted average (with weights $\pi(x)$) is equal to $\frac{ \widehat{\sigma}^{2} }{ 2 }$.
\begin{lemma}
    The optimal relaxation time $(\tau)$ is a concave function of the stationary distribution $(\pi(x))$.
\end{lemma}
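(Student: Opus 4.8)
The plan is to read off from Theorem~\ref{Theorem:Mainresults}, equation~(\ref{eq:201702181190}), that once the process has been optimized the relaxation time is the explicit functional
\begin{equation}
\tau(\pi) \;=\; \frac{2\left( m_2 - m_1^2 \right)}{\widehat{\sigma}^{2}} \;=\; \frac{2}{\widehat{\sigma}^{2}}\left( \int_{x_1}^{x_2} x^{2}\pi(x)\,dx \;-\; \Bigl( \int_{x_1}^{x_2} x\,\pi(x)\,dx \Bigr)^{2} \right),
\end{equation}
with the average variance $\widehat{\sigma}^{2}/2$ held fixed. Since $\widehat{\sigma}^{2}>0$ is a constant, it suffices to show that the variance functional $\pi \mapsto m_2 - m_1^{2}$ is \emph{concave} on the convex set of admissible stationary densities; $\tau$ is then concave as a positive scalar multiple of it.

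First I would observe that $\pi \mapsto m_1 = \int_{x_1}^{x_2} x\,\pi(x)\,dx$ and $\pi \mapsto m_2 = \int_{x_1}^{x_2} x^{2}\pi(x)\,dx$ are both \emph{linear} in $\pi$. Hence for two admissible densities $\pi_0,\pi_1$ on the common interval $(x_1,x_2)$ and $\theta\in[0,1]$, writing $\pi_\theta = (1-\theta)\pi_0 + \theta\pi_1$, one has $m_i(\pi_\theta) = (1-\theta)m_i(\pi_0) + \theta m_i(\pi_1)$ for $i=1,2$. Substituting into $m_2 - m_1^{2}$ and applying the convexity of $t\mapsto t^{2}$ to the term $m_1(\pi_\theta)^{2}$ gives
\begin{equation}
m_2(\pi_\theta) - m_1(\pi_\theta)^{2} \;\geq\; (1-\theta)\left( m_2(\pi_0) - m_1(\pi_0)^{2} \right) + \theta\left( m_2(\pi_1) - m_1(\pi_1)^{2} \right),
\end{equation}
which is precisely concavity of $m_2 - m_1^{2}$, hence of $\tau$.

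As an independent confirmation I would compute the second variation directly: perturb $\pi \mapsto \pi + \epsilon h$ with $\int_{x_1}^{x_2} h(x)\,dx = 0$ (so $\pi+\epsilon h$ remains a density to first order); then $m_1$ and $m_2$ move linearly in $\epsilon$, and a short computation gives $\tfrac{d^{2}}{d\epsilon^{2}}\tau(\pi+\epsilon h) = -\tfrac{4}{\widehat{\sigma}^{2}}\bigl( \int_{x_1}^{x_2} x\,h(x)\,dx \bigr)^{2} \leq 0$, so the Hessian of $\tau$ is negative semidefinite.

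The only delicate point — and the closest thing to an obstacle — is pinning down the domain on which concavity is claimed. The combination $\pi_\theta$ must itself be an admissible stationary density on a \emph{fixed} interval, and the budget $\widehat{\sigma}^{2}/2$ must take the same value along the combination, so that $\tau$ is given by the single formula~(\ref{eq:201702181190}) at every $\pi_\theta$; one should also note that the associated optimal variance function~(\ref{eq:201803021411}) stays well defined and nonnegative (this is exactly the positivity claim recorded in the surrounding text). Once these conventions are fixed the argument is routine, so I would state them explicitly just before the Jensen step above.
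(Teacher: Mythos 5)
Your proof is correct and follows essentially the same route as the paper: both reduce concavity of $\tau$ to concavity of the variance functional $\pi \mapsto m_2 - m_1^2$ under mixtures with the same interval and fixed $\widehat{\sigma}^{2}$, the paper via the exact mixture-variance identity $m_2 - m_1^2 = \sum_i p_i\bigl((m_1 - m_1(i))^2 + (m_2(i) - m_1(i)^2)\bigr)$ and you via linearity of $m_1, m_2$ in $\pi$ plus convexity of $t \mapsto t^2$, which are the same observation. Your second-variation check and the explicit statement of the admissibility conventions are harmless additions.
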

\begin{proof}
Considering $m$ diffusion processes
with stationary distributions $\pi_i(x)$ for $i=1, ..., m$ which are defined over the same interval
and
have the same average variance $\left( \frac{\widehat{\sigma}^{2}}{2} \right)$,
the convex combination of their stationary distributions is as below,
\begin{equation}
    \label{eq:201803031597}
    \begin{gathered}
        \pi(x)=\sum\nolimits_{i=1}^m p_i\pi_i(x),
        \quad
        \Pi(x)=\sum\nolimits_{i=1}^m p_i\Pi_i(x),
    \end{gathered}
\end{equation}
where $p_i \geq 0$, for $i=1, ..., m$ and $\sum_{i=1}^{m} p_i = 1$.
For the mean and variance of the resultant stationary distribution, we have
\begin{equation}
    \label{eq:201803031607}
    \begin{gathered}
        m_1=\sum\nolimits_{i=1}^m p_im_1(i),
    \end{gathered}
\end{equation}
\begin{equation}
    \label{eq:201803031622}
    \begin{gathered}
        m_2 - m_1^2 = \sum\nolimits_{i=1}^m p_i((m_1-m_1(i))^2+(m_2(i)-m_1(i)^2)).
    \end{gathered}
\end{equation}
Therefore, $ m_2 - m_1^2$ is a concave function, i.e., we have
\begin{equation}
    \label{eq:201803031621}
    \begin{gathered}
        m_2-m_1^2 \geq \sum\nolimits_{i=1}^m p_i(m_2(i)-m_1(i)^2).
    \end{gathered}
\end{equation}
Hence, from (\ref{eq:201702181190}) it can be concluded that the optimal relaxation time $\tau$ is a concave  function of stationary distribution, i.e.,
\begin{equation}
    \begin{gathered}
        \tau \left( \sum\nolimits_{i=1}^{m} p_{i} \pi_{i}\left( x \right) \right)
        \geq
        \sum\nolimits_{i=1}^{m} p_i \tau \left( \pi_{i}\left( x \right) \right),
    \end{gathered}
\end{equation}
where $\tau\left( \pi_{i}\left( x \right) \right)$ is the optimal relaxation time corresponding to the $i$-th stationary distribution (i.e., $\pi_{i}(x)$),
and
$\tau \left( \sum\nolimits_{i=1}^{m} p_{i} \pi_{i}\left( x \right) \right)$
is the optimal relaxation time corresponding to the equilibrium distribution $\pi\left( x \right)  =  \sum\nolimits_{i=1}^{m} p_{i} \pi_{i}\left( x \right)$.
\end{proof}
\begin{lemma}
    The variance function $\frac{ \sigma^2(x) }{2}$ in
    (\ref{eq:201803021411})    %
    is a positive function for all $x$ in the interval $[ x_1, x_2 ]$.
\end{lemma}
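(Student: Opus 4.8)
The plan is to reduce the claim to an elementary one‑variable inequality. Since $\pi(x)\ge 0$ on $[x_1,x_2]$ and the prefactor $\widehat{\sigma}^{2}/(2(m_2-m_1^2))$ is strictly positive (the denominator $m_2-m_1^2$ is the variance of $\pi(x)$, hence positive for any non‑degenerate distribution), positivity of $\tfrac12\sigma^2(x)$ in (\ref{eq:201803021411}) is equivalent to positivity on $[x_1,x_2]$ of
\[
g(x)\;:=\;(m_1-x)\,\Pi(x)\;+\;\int_{x_1}^{x}\Pi(z)\,dz .
\]

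First I would simplify $g$ by integrating the last term by parts, writing $\int_{x_1}^{x}\Pi(z)\,dz = x\,\Pi(x)-\int_{x_1}^{x} z\,\pi(z)\,dz$ (using $\Pi(x_1)=0$, and in the unbounded case the vanishing of $z\Pi(z)$ at the lower endpoint, which follows from finiteness of the first moment). This yields the closed form
\[
g(x)\;=\;m_1\,\Pi(x)-\int_{x_1}^{x} z\,\pi(z)\,dz\;=\;\int_{x_1}^{x}(m_1-z)\,\pi(z)\,dz .
\]

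Next I would read off the sign. At the endpoints $g(x_1)=0$ and $g(x_2)=\int_{x_1}^{x_2}(m_1-z)\pi(z)\,dz=m_1-m_1=0$ by the definition of the mean. Differentiating, $g'(x)=(m_1-x)\pi(x)$, which is $\ge 0$ for $x\le m_1$ and $\le 0$ for $x\ge m_1$ (and $x_1<m_1<x_2$ for a non‑degenerate $\pi(x)$). Hence $g$ increases on $[x_1,m_1]$ and decreases on $[m_1,x_2]$; being zero at both endpoints it satisfies $g(x)\ge 0$ on all of $[x_1,x_2]$, with strict inequality on the interior wherever $\pi(x)>0$. Dividing by $\pi(x)$ and multiplying by the positive constant gives $\tfrac12\sigma^2(x)>0$ on $(x_1,x_2)$, which is the assertion; one also notes that $\tfrac12\sigma^2(x)\pi(x)=g(x)\cdot\widehat{\sigma}^2/(2(m_2-m_1^2))$ extends continuously by $0$ to the endpoints, consistent with the boundary conditions (\ref{eq:Eq8}).

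I do not expect a genuine obstacle here: the only real idea is spotting the integration‑by‑parts identity $g(x)=\int_{x_1}^{x}(m_1-z)\pi(z)\,dz$, after which unimodality is immediate. The one point needing care is the treatment of infinite endpoints — one must invoke finiteness of the first (and second) moments of $\pi(x)$ to justify that the boundary term in the integration by parts vanishes and that $m_1,m_2$ are well defined — an assumption already implicit in the statement of Theorem~\ref{Theorem:Mainresults}.
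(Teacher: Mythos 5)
Your proof is correct and follows essentially the same route as the paper: the paper's proof defines $V(x)$ in (\ref{eq:201802241389}) — which is exactly your integrated-by-parts form $\int_{x_1}^{x}(m_1-z)\pi(z)\,dz$ divided by $\pi(x)$ (up to the positive constant) — notes that $\pi(x)V(x)$ vanishes at both endpoints, and uses the sign change of its derivative $G(x)=\frac{\widehat{\sigma}^{2}/2}{m_2-m_1^2}(m_1-x)\pi(x)$ at $m_1$ to conclude positivity on $[x_1,m_1]$ and $[m_1,x_2]$ separately, just as you do; your explicit derivation of the identity from (\ref{eq:201803021411}) and the remark on infinite endpoints only make explicit what the paper leaves implicit.
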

{
\begin{proof}
    Based on
    (\ref{eq:201803021411}),    %
    we define $V(x)$ as below,
    \begin{equation}
        \label{eq:201802241389}
        \begin{gathered}
            V \left( x \right)
            =
            \frac{1}{\pi(x)} \int_{x_1}^{x} \frac{ \widehat{\sigma}^{2} \left( m_1 - z \right) }{ 2 \left( m_2 - m_1^2 \right) }  \pi(z)  dz.
         \end{gathered}
    \end{equation}
    From (\ref{eq:201802241389}), it can be concluded that $\pi\left( x \right) V(x) = 0$ for $x = x_1, x_2$.
    Since $\pi(x)$ is a positive function and $m_{1}$ is the weighted average of $x$ (with weight $\pi(x)$) in the range $[ x_1, x_2 ]$, then it is obvious that $x_1 < m_1 < x_2$.
    Defining
    $G(x)  =  \frac{d}{dx}  \left( \pi(x) V(x) \right)  =  \frac{ \widehat{\sigma}^{2} / 2 }{ m_{2} - m_{1}^{2} } \left( m_1 - x \right) \pi(x)$,
    and considering $\pi\left( x_1 \right) V\left( x_1 \right) = 0$,
    we have
    \begin{equation}
        \label{eq:201802243215}
        \begin{gathered}
            \int_{x_1}^{x} G(y) dy
            =
            \pi\left( x \right) V(x)
            -
            \pi\left( x_1 \right) V\left( x_1 \right)
            =
            \pi\left( x \right) V(x).
        \end{gathered}
    \end{equation}
    Since $G(x)$ for $x \in [ x_1, m_1 ]$ is positive, then from $(\ref{eq:201802243215})$ it can be concluded that $\int_{x_1}^{x} G(y) dy > 0$ for $x \in [ x_1, m_1 ]$ and consequently the variance function $V(x)$ for $x \in [ x_1, m_1 ]$ is a positive function.
    For $x \in [ m_1, x_2 ]$, considering
    $\pi\left( x_2 \right) V\left( x_2 \right) = 0$,
    we have
    \begin{equation}
        \label{eq:201802243235}
        \begin{gathered}
            \int_{x}^{x_2} G(y) dy
            =
            \pi\left( x_2 \right) V\left( x_2 \right)
            -
            \pi\left( x \right) V\left( x \right)
            =
            - \pi\left( x \right) V\left( x \right).
        \end{gathered}
    \end{equation}
    Since $G(x)$ for $x \in [ m_1, x_2 ]$ is negative, then from $(\ref{eq:201802243235})$, it can be concluded that $\int_{x}^{x_2} G(y) dy < 0$ for $x \in [ m_1, x_2 ]$ and consequently the variance function $V(x)$ for $x \in [ m_1, x_2 ]$ is a positive function.
\end{proof}
}

\begin{lemma}
    The weighted average of the variance function $\frac{ \sigma^2(x) }{2}$ in
    (\ref{eq:201803021411})    %
    (with weights $\pi(x)$) is equal to $\frac{ \widehat{\sigma}^{2} }{ 2 }$.
\end{lemma}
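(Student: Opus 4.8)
The plan is to exploit the cancellation of the $1/\pi(x)$ prefactor and thereby reduce the statement to an elementary moment identity for $\pi(x)$. First I would multiply the variance function in (\ref{eq:201803021411}) by $\pi(x)$ and integrate; the $1/\pi(x)$ cancels and the claim becomes
\[
\int_{x_1}^{x_2}\tfrac{1}{2}\sigma^{2}(x)\pi(x)\,dx
=
\frac{\widehat{\sigma}^{2}}{2\left(m_{2}-m_{1}^{2}\right)}
\int_{x_1}^{x_2}\!\left((m_1-x)\Pi(x)+\int_{x_1}^{x}\Pi(z)\,dz\right)dx
\overset{?}{=}\frac{\widehat{\sigma}^{2}}{2}.
\]
Next, using the integration by parts already invoked in the proof of the preceding lemma (and $\Pi(x_1)=0$), I would rewrite the inner bracket as $(m_1-x)\Pi(x)+\int_{x_1}^{x}\Pi(z)\,dz=\int_{x_1}^{x}(m_1-z)\pi(z)\,dz$, so that the whole task collapses to proving the purely scalar identity
\[
\int_{x_1}^{x_2}\left(\int_{x_1}^{x}(m_1-z)\pi(z)\,dz\right)dx = m_{2}-m_{1}^{2}.
\]

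I would establish this identity by interchanging the order of integration over the triangle $x_1\le z\le x\le x_2$, which turns the left-hand side into $\int_{x_1}^{x_2}(x_2-z)(m_1-z)\pi(z)\,dz$. Expanding $(x_2-z)(m_1-z)=m_1x_2-m_1z-x_2z+z^{2}$ and using $\int_{x_1}^{x_2}\pi=1$, $\int_{x_1}^{x_2}z\,\pi=m_1$, $\int_{x_1}^{x_2}z^{2}\pi=m_2$ reduces the integral to $m_1x_2-m_1^{2}-x_2m_1+m_2=m_2-m_1^{2}$, as required; substituting back gives the weighted average $\tfrac{\widehat{\sigma}^{2}}{2(m_2-m_1^2)}\cdot(m_2-m_1^2)=\tfrac{\widehat{\sigma}^{2}}{2}$. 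An equivalent route is a single integration by parts of the outer integral with $v=x$, where the boundary term vanishes because $F(x):=\int_{x_1}^{x}(m_1-z)\pi(z)\,dz$ satisfies $F(x_1)=0$ and $F(x_2)=m_1-m_1=0$.

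The computation is entirely routine. The only place where anything needs checking is the legitimacy of the order interchange (equivalently, the vanishing of the boundary term $F(x_2)=0$), which is precisely where the defining property $m_1=\int_{x_1}^{x_2}z\,\pi(z)\,dz$ of the mean is used; this is covered by the standing assumptions on $\pi(x)$ (positivity on $[x_1,x_2]$, finiteness of $m_1$ and $m_2$). I therefore do not anticipate any genuine obstacle beyond bookkeeping.
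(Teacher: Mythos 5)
Your proposal is correct and takes essentially the same route as the paper: the paper also reduces the claim to evaluating $\int_{x_1}^{x_2}\bigl(\int_{x_1}^{x}(m_1-z)\pi(z)\,dz\bigr)dx$ (having rewritten the bracket of (\ref{eq:201803021411}) in the form (\ref{eq:201802241389})) and does so by integration by parts, with the boundary term vanishing because $\int_{x_1}^{x_2}(m_1-z)\pi(z)\,dz=0$ and the remaining term giving $m_2-m_1^2$. Your explicit Fubini interchange over the triangle is just the same computation in a slightly different guise, so no substantive difference.
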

\begin{proof}
    Considering $V(x)$ defined in (\ref{eq:201802241389}), we have
    \begin{equation}
        \label{eq:201802243156}
        \begin{aligned}
            \int_{x_1}^{x_2} V(x)  \pi(x) dx
            &
            =
            \frac{ \widehat{\sigma}^{2} / 2 }{ m_{2} - m_{1}^{2} }  \int_{x_1}^{x_2}  dx \int_{x_1}^{x} \left( m_{1} - \xi \right) \pi(\xi) d\xi
            \\
            &
            =
            \frac{ \widehat{\sigma}^{2} / 2 }{ m_{2} - m_{1}^{2} }  \int_{x_1}^{x_2} d \left( x \int_{x_1}^{x} \left( m_{1} - \xi \right) \pi(\xi) d\xi \right)
            -
            \frac{ \widehat{\sigma}^{2} / 2 }{ m_2 - m_1^2 } \int_{ x_1 }^{ x_2 } x d \left( \int_{ x_1 }^{ x } \left( m_1 - \xi \right) \pi(\xi) d\xi \right)
            \\
            &
            =
            \frac{ \widehat{\sigma}^{2} / 2 }{ m_2 - m_1^2 }
            \left(
                \int_{x_1}^{x_2} \left( m_1 - \xi \right)  \pi(\xi) d\xi
                -
                \int_{x_1}^{x_2} x \left( m_1 - x \right) \pi(x) dx
            \right)
            \\
            &
            =
            \left( \frac{ \widehat{\sigma}^{2} / 2 }{ m_2 - m_1^2 } \right) \left(  m_2 - m_1^2  \right)
            \\
            &
            =
            \frac{ \widehat{\sigma}^{2} }{ 2 }.
        \end{aligned}
    \end{equation}
\end{proof}

\section{Examples of Optimal Diffusion Processes}
\label{sec:examples}

In this section, we report the optimal results for three types of diffusion processes,
namely
Pearson class of diffusion processes,
diffusion processes with polynomial variance functions of degree higher than two,
and
the diffusion process with Hyperexponential distribution as the stationary distribution which is an example of the class of diffusion processes with non-polynomial variance function.

\subsection{Pearson Class of Diffusion Processes}

Diffusion processes
whose %
drift and variance functions are polynomials of degrees at most one and two, respectively, are referred to as the Pearson class of diffusion processes,
since their stationary distribution satisfies the famous Pearson equation (\ref{eq:20170727442}).
Thus, they are all optimal in terms of the convergence rate $(\lambda_1)$.
In Table \ref{tab:Table2}, we have provided different attributes of
some of the well-known types of
Pearson diffusion processes, including
the stationary distributions,
along with their first and second moments,
the spectrum of the corresponding infinitesimal generators (and $\lambda_1$ which determines the optimal convergence rate),
the corresponding differential equations
and
the optimal variance functions (obtained using
(\ref{eq:201803021411}))    %
along with its average and the interval.
The Pearson diffusion processes included in Table \ref{tab:Table2} are also known as Pearson diffusion processes of the Hypergeometric type, since their corresponding eigenfunctions are written in terms of either the Hypergeometric functions or the confluent Hypergeometric functions.
Among the Pearson diffusion processes included in Table \ref{tab:Table2},
for
Hypergeometric, Jacobi, CIR, Ornstein-Uhlenbeck processes,
all moments of the stationary distributions have finite values,
while for
Fisher-Snedecor, Student and Reciprocal Gamma processes,
only finite number of moments of the stationary distributions have finite values.
The stationary distributions of
Fisher-Snedecor, Student and Reciprocal Gamma processes,
are also known as heavy-tailed distributions, which are used in different applications including communication networks, insurance industry and modeling the price of risky assets \cite{LEONENKO201030,Peskir2006,martinbibby2005}.
\begin{table*}[t]
\centering
    \caption{Optimal Pearson diffusion processes of the hypergeometric type with variance $(\sigma^{2}(x)/2)$ of at most degree two.}
    \label{tab:Table2}    %
    \begin{tabular}{|c|c|c|c|c|c|c|} \hline
    {
        \hspace{-12pt}
        \begin{minipage}{60pt}
            \centering
            \footnotesize{Eqi. Dist.: $\boldsymbol{\pi}(x)$}
            \\
            \vspace{1pt}
            \footnotesize{$x$ Interval}
            \vspace{1pt}
        \end{minipage}
        \hspace{-12pt}
    } %
    & { \hspace{-12pt} \begin{minipage}{65pt} \centering \small{$m_1$,} \\  \small{$m_2 - m_1^2$} \end{minipage}  \hspace{-12pt} }  %
    & { \hspace{-12pt} \begin{minipage}{25pt} \centering \small{$\lambda_n$,} \\ \small{$\lambda_1$} \end{minipage}   \hspace{-12pt} }  %
    & { \hspace{-12pt} \small{$\boldsymbol{\phi}_n\left(x\right)$} \hspace{-12pt} }  %
    & {
        \hspace{-12pt}
        \begin{minipage}{130pt} \centering
            Process Name,
            \\
            Differential Equation
        \end{minipage}
        \hspace{-12pt}
    } %
    & { \hspace{-12pt}
            \begin{minipage}{40pt}
                \centering
                \vspace{1pt}
                    \scriptsize{$\sigma^{2}(x) / 2$,}
                \vspace{2pt}
                \\
                \scriptsize{$\widehat{\sigma}^{2} / 2$,}
                \\
                \scriptsize{$\mu(x)$,}
            \end{minipage}
        \hspace{-12pt}
      } %
    \\ \hline
    {   \hspace{-12pt}
        \begin{minipage}{60pt}
            \centering
            \scriptsize{Beta: $\frac{x^\alpha (1-x)^\beta}{B(\alpha+1,\beta+1)}$}
            \\
            \scriptsize{$-1<\alpha,\beta$,}
            \\
            \scriptsize{$x \in [0,1]$}
        \end{minipage}
        \hspace{-12pt}
    }  %
    & { \hspace{-12pt} \begin{minipage}{65pt} \centering \footnotesize{$\frac{\alpha+1}{\alpha+\beta+2}$,} \\  \footnotesize{$\frac{(\alpha+1) (\beta+1)}{(\alpha+\beta+2)^2(\alpha+\beta+3)}$} \end{minipage}  \hspace{-12pt} }  %
    & { \hspace{-12pt} \begin{minipage}{55pt} \centering \footnotesize{$n(n+\alpha+\beta+1)$,} \\ \footnotesize{$\alpha+\beta+2$} \end{minipage}  \hspace{-12pt} }  %
    & { \hspace{-12pt} \begin{minipage}{120pt} \centering \scriptsize{Hypergeometric Poly.:} \\  \footnotesize{${}_2F_1(-n, n+\alpha+\beta+1;\alpha+1 ;x)$} \end{minipage}  \hspace{-12pt} }  %
    & { \hspace{-12pt} \begin{minipage}{130pt} \centering
        \vspace{1pt}
        \footnotesize{Hypergeometric Process,}
        \\
        \vspace{1pt}
        \footnotesize{$x \left( 1 - x \right) \frac{d^2\phi_n(x)}{dx^2}$ $+$
        $\left[ \left( \alpha + 1 \right) - \left( \alpha + \beta + 2 \right) x \right] \frac{d\phi_n(x)}{dx}$ $+$
        }
        \\
        \footnotesize{
        $n \left( n + \alpha + \beta + 1 \right)$ $\phi_n(x)$
        $=$
        $0$
        }
        \end{minipage}  \hspace{-12pt}
        } %
    &{ \hspace{-12pt} \begin{minipage}{65pt} \centering
        \vspace{1pt}
        \scriptsize{ $x(1-x)$, }
        \\
        \vspace{1pt}
        \scriptsize{ $\frac{(\alpha+1)(\beta+1)}{(\alpha+\beta+3)(\alpha+\beta+2)}$, }
        \\
        \vspace{1pt}
        \resizebox{.97\hsize}{!}{$\alpha+1 - x(\alpha+\beta+2)$,}
        \end{minipage}  \hspace{-12pt}
    } %
    \\ \hline
    {   \hspace{-11pt}
        \begin{minipage}{70pt}
            \centering
            \vspace{1pt}
            \footnotesize{Jacobi:}
            \\
            \footnotesize{$\frac{(1-x)^\alpha (1+x)^\beta}{2^{\alpha+\beta+1}B(\alpha+1,\beta+1)}$}
            \\
            \footnotesize{$-1<\alpha,\beta$,}
            \\
            \footnotesize{$x \in [-1, 1]$}
        \end{minipage}
        \hspace{-11pt}
    }  %
    &{  \hspace{-12pt}
        \begin{minipage}{65pt}
            \centering
            \footnotesize{$\frac{\beta-\alpha}{\alpha+\beta+2} $,}
            \\
            \footnotesize{$\frac{4(\alpha+1) (\beta+1)}{(\alpha+\beta+2)^2(\alpha+\beta+3)}$}
        \end{minipage}
        \hspace{-12pt}
    }  %
    & { \hspace{-10pt}
        \begin{minipage}{55pt}
            \centering
            \footnotesize{$n(n+\alpha+\beta+1)$,}
            \\
            \footnotesize{$\alpha+\beta+2$}
        \end{minipage}
        \hspace{-10pt}
    }  %
    & { \hspace{-12pt}
        \begin{minipage}{120pt}
            \centering
            \scriptsize{Jacobi Poly.: $P_n^{\alpha,\beta}(x)$}
            \\
            \scriptsize{${}_2F_1(-n, n+\alpha+\beta+1;\alpha+1 ;\frac{1-x}{2})$}
        \end{minipage}
        \hspace{-12pt}
    }  %
    &{  \hspace{-11pt}
        \begin{minipage}{130pt}
            \centering
            \footnotesize{Jacobi Process,}
            \\
            \scriptsize{$(1-x^2)\frac{d^2\phi_n(x)}{dx^2}+[(\beta-\alpha)-(\alpha+\beta+2)x]\frac{d\phi_n(x)}{dx}+n(n+\alpha+\beta+1)\phi_n(x)=0$}
        \end{minipage}
        \hspace{-11pt}
    } %
    & { \hspace{-12pt} \begin{minipage}{65pt} \centering
        \vspace{1pt}
        \footnotesize{ $1-x^{2}$, }
        \\
        \vspace{1pt}
        \footnotesize{ $\frac{4(\alpha+1)(\beta+1)}{(\alpha+\beta+3)(\alpha+\beta+2)}$, }
        \\
        \resizebox{.98\hsize}{!}{$\beta-\alpha-(\alpha+\beta+2)x$,}
        \end{minipage}  \hspace{-12pt}
    } %
    \\ \hline
    {
        \hspace{-11pt}
        \begin{minipage}{70pt}
            \centering
            \scriptsize{Gamma: $\frac{x^\alpha e^{-x}}{\Gamma(\alpha+1)}$,}
            \\  \vspace{1pt}
            \scriptsize{$-1<\alpha$,}
            \\ \vspace{1pt}
            \scriptsize{$x \in [0, \infty)$}
            \vspace{1pt}
        \end{minipage}
        \hspace{-11pt}
    }  %
    &{  \hspace{-12pt}
        \begin{minipage}{65pt}
            \centering
            \footnotesize{$\alpha+1$,}
            \\
            \footnotesize{$\alpha+1$}
        \end{minipage}
        \hspace{-12pt}
    }  %
    & { \hspace{-10pt}
        \begin{minipage}{55pt}
            \centering
            \footnotesize{$n$,}
            \\
            \footnotesize{$1$}
        \end{minipage}
        \hspace{-10pt}
    }  %
    & { \hspace{-12pt}
        \begin{minipage}{120pt}
            \centering
            \scriptsize{Laguerre Poly.: $L_n^\alpha(x)=$}
            \\
            \scriptsize{${}_1F_1(-n;\alpha+1;x)$}
        \end{minipage}
        \hspace{-12pt}
    }  %
    &{  \hspace{-11pt}
        \begin{minipage}{130pt}
            \centering
            \footnotesize{CIR Process,}
            \\
            \scriptsize{$x\frac{d^2\phi_n(x)}{dx^2}+(\alpha+1-x)\frac{d\phi_n(x)}{dx}+n\phi_n(x)=0$}
        \end{minipage}
        \hspace{-11pt}
    } %
    & { \hspace{-12pt} \begin{minipage}{65pt} \centering
        \vspace{1pt}
        \scriptsize{ $x$, }
        \\
        \vspace{1pt}
        \scriptsize{ $\alpha + 1$, }
        \\
        \vspace{1pt}
        \scriptsize{$\alpha+1-x$,}
        \end{minipage}  \hspace{-12pt}
    } %
    \\ \hline
    {   \hspace{-11pt}
        \begin{minipage}{70pt}
            \centering
            \vspace{1pt}
            \scriptsize{Normal: $\frac{e^{-\frac{(x-x_0)^2}{2\sigma^2}}}{\sqrt{2\pi\sigma^2}}$,}
            \\
            \footnotesize{$x \in \left( -\infty, \infty \right)$}
        \end{minipage}
        \hspace{-11pt}
    }  %
    &{  \hspace{-12pt}
        \begin{minipage}{65pt}
            \centering
            \footnotesize{$x_{0}$,}
            \\
            \footnotesize{$\sigma^{2}$}
        \end{minipage}
        \hspace{-12pt}
    }  %
    & { \hspace{-10pt}
        \begin{minipage}{55pt}
            \centering
            \footnotesize{$n$,}
            \\
            \footnotesize{$1$}
        \end{minipage}
        \hspace{-10pt}
    }  %
    & { \hspace{-12pt}
        \begin{minipage}{120pt}
            \centering
            \scriptsize{Hermite Poly.: $H_n(\frac{x-x_0}{\sigma})=$}
            \\
            \scriptsize{$(\frac{x-x_0}{\sigma})^n{}_2F_0(\frac{-n}{2},\frac{1-n}{2}; \frac{-\sigma^2}{(x-x_0)^2})$}
        \end{minipage}
        \hspace{-12pt}
    }  %
    &{  \hspace{-11pt}
        \begin{minipage}{130pt}
            \centering
            \footnotesize{Ornstein-Uhlenbeck Process,}
            \vspace{1pt}
            \\
            \scriptsize{$\frac{d^2\phi_n(x)}{dx^2}-x\frac{d\phi_n(x)}{dx}+n\phi_n(x)=0$}
        \end{minipage}
        \hspace{-11pt}
    } %
    & { \hspace{-12pt} \begin{minipage}{65pt} \centering
        \vspace{1pt}
        \footnotesize{ $1$, }
        \\
        \vspace{1pt}
        \footnotesize{ $1$, }
        \\
        \vspace{1pt}
        \footnotesize{$-x$}
        \end{minipage}  \hspace{-12pt}
    } %
    \\ \hline
    {   \hspace{-11pt}
        \begin{minipage}{70pt}
            \centering
            \resizebox{.99\hsize}{!}{Cauchy: $\frac{ \left( 1 + x^2 \right)^{-(\alpha + \frac{1}{2})} }{ B\left( \alpha , \frac{1}{2} \right) }$}
            \\
            \scriptsize{$2\leq\alpha$,}
            \\
            \scriptsize{$x \in (-\infty, \infty)$}
        \end{minipage}
        \hspace{-11pt}
    }  %
    &{  \hspace{-12pt}
        \begin{minipage}{65pt}
            \centering
            \footnotesize{$0$,}
            \\
            \footnotesize{$\frac{1}{2(\alpha-1)}$}
        \end{minipage}
        \hspace{-12pt}
    }  %
    & { \hspace{-10pt}
        \begin{minipage}{55pt}
            \centering
            \footnotesize{$n(2\alpha-n)$ for $n=0,...,\lfloor \alpha \rfloor$,}
            \\
            \footnotesize{$2\alpha-1$}
        \end{minipage}
        \hspace{-10pt}
    }  %
    & { \hspace{-12pt}
        \begin{minipage}{120pt}
            \centering
            \scriptsize{$\sqrt{\frac{(\alpha-n)\Gamma(\alpha)}{\sqrt{\pi}n!\Gamma(\alpha+\frac{1}{2})}}2^{\alpha-n}\Gamma(\alpha-n+\frac{1}{2})$}
            \\
            \scriptsize{$(1+x^2)^{\alpha+\frac{1}{2}}\frac{d^n}{dx^n}(1+x^2)^{n-\alpha-\frac{1}{2}}$}
        \end{minipage}
        \hspace{-12pt}
    }  %
    &{  \hspace{-11pt}
        \begin{minipage}{130pt}
            \centering
            \footnotesize{Student Process,}
            \\
            \scriptsize{$(1+x^2)\frac{d^2\phi_n(x)}{dx^2}  +  \left( 1 - 2\alpha \right)x \frac{d\phi_n(x)}{dx}  +  n\left( 2 \alpha - n \right) \phi_n(x)  =  0$,}
        \end{minipage}
        \hspace{-11pt}
    } %
    & { \hspace{-12pt} \begin{minipage}{65pt} \centering
        \vspace{1pt}
        \footnotesize{ $1 + x^{2}$, }
        \\
        \vspace{1pt}
        \footnotesize{ $\frac{ \alpha - 1 }{ \alpha - \frac{1}{2} }$, }
        \\
        \vspace{1pt}
        \footnotesize{$(1-2\alpha)x$}
        \end{minipage}  \hspace{-12pt}
    } %
    \\ \hline
    {   \hspace{-11pt}
        \begin{minipage}{70pt}
            \centering
            \scriptsize{Inverse Gamma:}
            \\
            \scriptsize{$\frac{ x^{-(2\alpha+1)}e^{-\frac{1}{x} }}{\Gamma(2\alpha) }$,}
            \\
            \vspace{1pt}
            \tiny{$2\leq\alpha$,}
            \\
            \vspace{1pt}
            \tiny{$x \in [0, \infty)$}
        \end{minipage}
        \hspace{-11pt}
    }  %
    &{  \hspace{-12pt}
        \begin{minipage}{65pt}
            \centering
            \footnotesize{$\frac{1}{2\alpha-1}$,}
            \\
            \footnotesize{$\frac{1}{2(\alpha-1)(\alpha-1)^2}$}
        \end{minipage}
        \hspace{-12pt}
    }  %
    & { \hspace{-10pt}
        \begin{minipage}{55pt}
            \centering
            \footnotesize{$n(2\alpha-n)$ for $n=0,...,\lfloor \alpha \rfloor$,}
            \\
            \footnotesize{$2\alpha-1$}
        \end{minipage}
        \hspace{-10pt}
    }  %
    & { \hspace{-12pt}
        \begin{minipage}{120pt}
            \centering
            \scriptsize{$(-1)^n \sqrt{\frac{2(\alpha-n)\Gamma(2\alpha)}{n!\Gamma(2\alpha+1-n)}}$}
            \\
            \scriptsize{$x^{2\alpha+1}e^{\frac{1}{x}}\frac{d^n}{dx^n}(x^{2n-2\alpha-1}e^{-\frac{1}{x}})$}
        \end{minipage}
        \hspace{-12pt}
    }  %
    &{  \hspace{-11pt}
        \begin{minipage}{130pt}
            \centering
            \footnotesize{Reciprocal Gamma Process,}
            \\
            \resizebox{.99\hsize}{!}{$x^2 \frac{d^2\phi_n(x)}{dx^2}   +   \left( 1 - \left( 2\alpha + 1 \right) x \right) \frac{d\phi_n(x)}{dx}$}
            \\
            \resizebox{.62\hsize}{!}{$+ n\left( 2 \alpha - n \right) \phi_n(x)   =   0$}
        \end{minipage}
        \hspace{-11pt}
    } %
    & { \hspace{-12pt} \begin{minipage}{65pt} \centering
        \vspace{1pt}
        \footnotesize{ $x^{2}$, }
        \\
        \vspace{1pt}
        \footnotesize{ $\frac{ 2 \alpha - 1 }{ 2 \alpha - 2 }$, }
        \\
        \scriptsize{$1-(2\alpha+1)x$}
        \end{minipage}  \hspace{-12pt}
    } %
    \\ \hline
    {   \hspace{-11pt}
        \begin{minipage}{70pt}
            \centering
            \resizebox{.99\hsize}{!}{Snedecor-Fisher's $F$-dist.:}
            \\
            \vspace{2pt}
            \resizebox{0.99\hsize}{!}{$\frac{  \left( \frac{\nu_1}{\nu_2} \right)^{\frac{\nu_1}{2}} x^{\frac{\nu_1}{2} - 1}  }{  B \left( \frac{\nu_1}{2} , \frac{\nu_2}{2} \right) \left( 1 + \frac{\nu_1}{\nu_2}x \right)^{ \frac{\nu_1 + \nu_2}{2} }  }$}
            \\
            \vspace{1pt}
            \tiny{$x \in [0, \infty)$}
        \end{minipage}
        \hspace{-11pt}
    }  %
    &{  \hspace{-12pt}
        \begin{minipage}{65pt}
            \centering
            \footnotesize{$\frac{ \nu_{2} }{ \nu_{2}-2 }$,}
            \\
            \footnotesize{$\frac{  2 \nu_{2}^{2} ( \nu_{2} + \nu_{1} - 2 )  }{  \nu_{1} ( \nu_{2} - 2 )^{2} ( \nu_{2} - 4 )  }$}
        \end{minipage}
        \hspace{-12pt}
    }  %
    & { \hspace{-10pt}
        \begin{minipage}{55pt}
            \centering
            \resizebox{0.99\hsize}{!}{$\frac{\nu_1}{2\nu_2}n(6+\nu_2-2n)$,}
            \\
            \footnotesize{$\nu_{1} \left(  \frac{ 2 }{ \nu_{2} }  +  \frac{ 1 }{ 2 }  \right)$}
        \end{minipage}
        \hspace{-10pt}
    }  %
    & { \hspace{-12pt}
        \begin{minipage}{120pt}
            \centering
            \resizebox{0.99\hsize}{!}{$\nu_{1} \nu_{2}^{n} 2^{n-1} {}_{2}F_{1}\left( -n, n - \frac{ \nu_{2} }{ 2 }; \frac{ \nu_{1} }{ 2 }; -\frac{ \nu_{1} }{ \nu_{2} }x  \right)$}
        \end{minipage}
        \hspace{-12pt}
    }  %
    &{  \hspace{-11pt}
        \begin{minipage}{130pt}
            \centering
            \vspace{1pt}
            {Fisher Snedecor Diffusion}
            \vspace{1pt}
            \\
            \scriptsize{$x \left( 1 + \frac{\nu_1}{\nu_2}x \right) \frac{d^{2} \phi_{n}}{dx^{2}}
            +
            \left(  \frac{\nu_1}{2} - 1 - \nu_1 \left( \frac{2}{\nu_2} + \frac{1}{2} \right) x  \right) \frac{d \phi_{n}}{d x}
            +
            \frac{\nu_1}{2\nu_2}n(6+\nu_2-2n) \phi_n  =  0$}
        \end{minipage}
        \hspace{-11pt}
    } %
    & { \hspace{-12pt} \begin{minipage}{65pt} \centering
        \scriptsize{ $x \left( 1 + \frac{ \nu_{1} }{ \nu_{2} } x \right)$, }
        \\
        \scriptsize{ $\frac{ \nu_{2} \left( \nu_{2} - 4 \right) }{ \nu_{1} + \nu_{2} - 2 }$, }
        \\
        \resizebox{.98\hsize}{!}{$\frac{\nu_1}{2} - 1 - \left( \frac{2\nu_1}{\nu_2} + \frac{\nu_1}{2} \right) x$}
        \end{minipage}  \hspace{-12pt}
    } %
    \\ \hline
    \end{tabular}
\end{table*}

\subsection{Diffusion Processes with Polynomial Variance Functions of Degree Higher than Two}

In this subsection, we consider a diffusion process where its variance function is a polynomial function with degree higher than two.
Considering the following stationary distribution,
\begin{equation}
    \label{eq:201803043479}
    \begin{gathered}
        \pi(x)
        =
        \frac
        {  x^{\alpha - 1} \left( 1 - x \right)^{\beta-1} \left(1 - \alpha x \right)^{ - \left( \alpha + \beta + 1 \right) }  }
        {  B\left( \alpha, \beta \right)  {}_{2}F_{1}\left( \alpha+\beta+1, \alpha, \alpha+\beta, a \right)  },
    \end{gathered}
\end{equation}
with $|a| < 1$, $0 < \alpha, \beta$,
the moments of the stationary distribution can be written as below
\begin{equation}
    \label{eq:201803043516}
    \begin{gathered}
        m_{1}
        =
        \frac
        {   B\left( \alpha+1, \beta \right)  {}_{2}F_{1}\left( \alpha + \beta + 2, \alpha + 1, \alpha + \beta + 1, a \right)        }
        {   B\left( \alpha, \beta \right) {}_{2}F_{1}\left( \alpha + \beta + 1, \alpha, \alpha + \beta, a \right) },
    \end{gathered}
\end{equation}
\begin{equation}
    \label{eq:201803043529}
    \begin{gathered}
        m_{2}
        =
        \frac
        {  B\left( \alpha+2, \beta \right)  {}_{2}F_{1}\left( \alpha + \beta + 2, \alpha + 2, \alpha + \beta + 2, \alpha \right)  }
        {  B\left( \alpha, \beta \right) {}_{2}F_{1}\left( \alpha + \beta + 1, \alpha, \alpha + \beta, a \right) },
    \end{gathered}
\end{equation}
where ${}_{2}F_{1}\left( . \right)$ is the Hypergeometric function as defined in Appendix \ref{sec:Hypergeometric}.
Assuming the following average variance
\begin{equation}
    \label{eq:201803043542}
    \begin{gathered}
        \frac{ \widehat{\sigma}^{2} }{ 2 }
        =
        \frac
        {  B\left( \alpha+1, \beta+1 \right)  {}_{2}F_{1}\left( \alpha + \beta, \alpha + 1, \alpha + \beta + 2, a \right)  }
        {  B\left( \alpha, \beta \right) {}_{2}F_{1}\left( \alpha + \beta + 1, \alpha, \alpha + \beta, a \right) },
    \end{gathered}
\end{equation}
the optimal variance and drift functions are obtained as below,
\begin{equation}
    \label{eq:201803043494}
    \begin{gathered}
        \frac{ \sigma^{2}(x) }{ 2 }
        =
        x ( 1 - x ) ( 1 - ax ),
    \end{gathered}
\end{equation}
\begin{equation}
    \label{eq:201803043505}
    \begin{gathered}
        \mu(x)
        =
        -\left( \beta \left( 1 - a \right) \right) x + \alpha.
    \end{gathered}
\end{equation}
Using the above results, the optimal convergence rate can be obtained from (\ref{eq:201702181190}).

\subsection{Diffusion Processes with Non-Polynomial Variance Function}

As an example of the diffusion processes where their variance function is a non-polynomial function,
in this subsection, we have provided the optimal convergence rate for the diffusion process with the hyperexponential distribution as the stationary distribution.
The hyperexponential distribution is the description of exponential processes in parallel which can be written as below,
\begin{equation}
    \label{eq:Eq1079}
    \begin{gathered}
        \pi(x)  =   \left( p_1 \eta_1 e^{- \eta_1 x} + p_2 \eta_2 e^{- \eta_2 x } \right),
     \end{gathered}
\end{equation}
where, $p_i, \eta_i \geq 0$ for $i=1,2$ and $p_1+p_2=1$.
The hyperexponential distribution is the convex combination of two gamma functions.
First moment and variance of this distribution can be written as below,
\begin{equation}
    \label{eq:Eq1101}
    \begin{gathered}
        m_1  =   \frac{ p_1 }{ \eta_1 }  +  \frac{ p_2 }{ \eta_2 },
     \end{gathered}
\end{equation}
\begin{equation}
    \begin{gathered}
        m_2  -  m_1^2
        =
        \frac{p_1}{\eta_1^2}  +  \frac{p_2}{\eta_2^2}
        +
        p_1 p_2 \left( \frac{1}{\eta_1} - \frac{1}{\eta_2} \right)^2.
    \end{gathered}
\end{equation}
The
cumulative 
distribution function of the hyperexponential distribution is as following,
\begin{equation}
    \begin{gathered}
        \Pi(x) =  1  -  p_1 e^{- \eta_1 x}  -  p_2 e^{- \eta_2 x }.
    \end{gathered}
\end{equation}
Using (\ref{eq:201803021411}), the variance function can be written as below,
\begin{equation}
    \begin{aligned}
       \frac{\sigma^2(x)}{2}
       =
       \frac{ \widehat{\sigma}^2 }{ 2\left( m_2 - m_1^2 \right) \pi(x) } \times
       \left( p_1 e^{- \eta_1 x} \left( x + p_2 \left( \frac{1}{\eta_1} - \frac{1}{\eta_2} \right) \right) - p_2 e^{- \eta_2 x } \left( x + p_1 \left( \frac{1}{\eta_2} - \frac{1}{\eta_1} \right) \right) \right),
     \end{aligned}
\end{equation}
and based on (\ref{eq:201702181190}), for the optimal convergence rate $(\lambda_1)$, we have
\begin{equation}
    \nonumber
    \begin{gathered}
        \lambda_1
        =
         \frac
         {  \widehat{\sigma}^{2} / 2  }
         {  \frac{p_1}{\eta_1^2}  +  \frac{p_2}{\eta_2^2}  +  p_1 p_2 \left( \frac{1}{\eta_1} - \frac{1}{\eta_2} \right)^2  }
         .
    \end{gathered}
\end{equation}
\section{Conclusions}
\label{sec:Conclusions}

Considering all diffusion processes with the given stationary distribution and average variance, here in this paper, we seek the diffusion process with the optimal convergence rate.
In doing so, we have shown that
the optimal drift function is linear and
the ratio of the average variance to the variance of the stationary distribution is an upper bound on the convergence rate of the diffusion processes with given average variance and stationary distribution.
Furthermore, we have proved that
the optimal relaxation time is a concave function of the stationary distribution.
An interesting topic worthy of further investigation is the extension of these analysis to fractional diffusion processes.

\appendix

\section{Optimal Results (Solution)}
\label{Sec:DetailedSolution}

Integrating (\ref{eq:Eq609}),
$\phi_1(x)$
can be written as a first degree polynomial of $x$, i.e.
\begin{equation}
    \label{eq:201803034210}
    \begin{gathered}
        \phi_1(x)  =  \sqrt{\nu} x + c,
    \end{gathered}
\end{equation}
where $c$ is a constant.
Hence, based on (\ref{eq:Eq634}),
$\mu(x)$ %
can be written as
$\mu(x)  =  -\lambda_{1} x - \left( \lambda_{1} / \sqrt{\nu} \right) c$. %
For given stationary distribution $\left( \pi(x) \right)$, domain of $x$ (i.e., $x_1$, $x_2$) and the average variance $\widehat{\sigma}^{2}/2$,
from
$\int_{x_1}^{x_2} { \phi_1(x) \pi(x) dx }  =  0$,
while considering
(\ref{eq:201803034210}),
we have
\begin{equation}
    \label{eq:Eq703}
    \begin{gathered}
        \sqrt{\nu} m_1  + c  =  0,
     \end{gathered}
\end{equation}
where $m_1$ is the first momentum of $\pi(x)$, (i.e., $m_1  =  \int_{x_1}^{x_2} {x \pi(x) dx}$).
On the other hand, considering
(\ref{eq:201803034210}),
from $\int_{x_1}^{x_2} { \pi(x) \left( \phi_1(x) \right)^2 dx }  =  1$,
we have
\begin{equation}
    \label{eq:Eq722}
    \begin{gathered}
        \nu m_2 + 2c\sqrt{\nu}m_1 + c^2  =  1,
     \end{gathered}
\end{equation}
where $m_2$ is the second momentum of $\pi(x)$, (i.e., $m_2  =  \int_{x_1}^{x_2} {x^2 \pi(x) dx}$).
From (\ref{eq:Eq703}) and (\ref{eq:Eq722}), for $\nu$ and $c$, we have
\begin{equation}
    \label{eq:Eq741}
    \begin{gathered}
        \nu = 1 / \left( m_2 - m_1^2 \right),
     \end{gathered}
\end{equation}
\begin{equation}
    \label{eq:Eq749}
    \begin{gathered}
        c = \frac{ - m_1 }{ \sqrt{m_2 - m_1^2} }.
    \end{gathered}
\end{equation}
Note that here $\nu$ is the inverse of standard deviation of $\pi(x)$ and $c$ is the inverse negative of the coefficient of variation of $\pi(x)$.
Substituting (\ref{eq:Eq741}) and (\ref{eq:Eq749}) in (\ref{eq:201803034210}), $\phi_1(x)$ can be written %
as in (\ref{eq:Eq760}).
Thus,
from %
(\ref{eq:Eq760}) and (\ref{eq:Eq634}),
$\mu(x)$  %
is obtained as in (\ref{eq:201803031404}),
and
$\frac{1}{2}\sigma^{2}(x)$  %
(given in (\ref{eq:201803021411}))    %
can be obtained from integrating
(\ref{eq:20170727442}).
Regarding the optimal convergence rate $\left( \lambda_1 \right)$ (given in (\ref{eq:201702181190})), $\lambda_1$ can be obtained by substituting (\ref{eq:Eq741}) in (\ref{eq:201802171163}).

The constraint
(\ref{eq:Eq8}) %
should be satisfied, for all $Q(x)$.
From (\ref{eq:Eq760}), it is obvious that $\frac{\partial}{\partial x}Q(x) = \sqrt{\nu}$.
Substituting this in constraint
(\ref{eq:Eq8}), %
the following  can be concluded,
\begin{equation}
    \label{eq:Eq829}
    \begin{gathered}
        \left. \frac{1}{2}\sigma^{2}(x) \pi(x) \right|_{x = x_1, x_2}  =  0.
     \end{gathered}
\end{equation}

From
(\ref{eq:201803021411}),    %
we have
\begin{equation}
    \label{eq:Eq843}
    \begin{gathered}
        \left. \frac{1}{2}\sigma^{2}(x) \pi(x) \right|_{x = x_1, x_2}  =  0,
        \;\;\;\;
        \left. \int_{x_1}^{x} \mu(z) \pi(z) dz \right|_{x = x_1, x_2}  =  0.
     \end{gathered}
\end{equation}
It is obvious that constraint (\ref{eq:Eq843}) holds true for $x=x_1$.
Regarding the case of $x=x_2$, we have
\begin{equation}
    \label{eq:Eq857}
    \begin{gathered}
        \int_{x_1}^{x_2} \mu(z) \pi(z) dz
        =
        \int_{x_1}^{x_2} \frac{-\lambda_{1}}{\sqrt{\nu}} \phi_1(z) \pi(z) dz
        =
        0.
     \end{gathered}
\end{equation}
The equalities above are based on (\ref{eq:Eq634}) and (\ref{eq:Eq386}) (for $m=1$, $n=0$), respectively.

\section{Hypergeometric Function}
\label{sec:Hypergeometric}
Hypergeometric function \cite{LegendreBook1968,LegendreBookHilbert1965}  %
is defined as below,
\begin{equation}
    \nonumber
    \begin{gathered}
 {}_2F_1(\alpha, \beta, \gamma, \xi)= \sum_{r=0}^{\infty}\frac{(\alpha)_r(\beta)_r\xi^r}{(\gamma)_rr!},
    \end{gathered}
\end{equation}
where  $(\alpha)_r$ is defined  as $(\alpha)_r = \alpha(\alpha + 1)\cdots (\alpha + r - 1)$.

\bibliography{Diffusion_Process_ArXiv_Format}

\end{document}